\documentclass[11pt]{amsart}

\usepackage[T1]{fontenc}
\usepackage[utf8]{inputenc}
\usepackage{lmodern}
\usepackage{microtype}
\usepackage{amsmath,amssymb,amsthm,mathtools}
\usepackage{enumitem}
\usepackage[colorlinks=true,linkcolor=blue,citecolor=blue,urlcolor=blue]{hyperref}

\makeatletter
\@ifundefined{subjclassname@2020}{%
  \@namedef{subjclassname@2020}{\textup{2020} Mathematics Subject Classification}%
}{}
\makeatother

\newtheorem{thm}{Theorem}[section]
\newtheorem{lem}[thm]{Lemma}
\newtheorem{prop}[thm]{Proposition}
\newtheorem{cor}[thm]{Corollary}
\theoremstyle{remark}
\newtheorem{rmk}{Remark}

\newcommand{\PP}{\mathbb{P}}

\title{On the problem of large gcd for disjoint residue classes}
\author{Jan Fornal}
	\address{
		Department of Mathematics,
		University of Bristol \\
		Beacon House, Queens Rd, Bristol BS8 1QU}
	\email{nc24166@bristol.ac.uk}
\author{Yu-Chen Sun}
\address{Department of Mathematics,
		University of Bristol \\
		Beacon House, Queens Rd, Bristol BS8 1QU}
\email{yuchensun93@163.com}
\subjclass[2020]{Primary 11B25; Secondary 11A05, 11A07, 11N35}
\keywords{Disjoint residue classes, arithmetic progressions, greatest common divisors,
  sieve methods}
\date{}

\begin{document}

\begin{abstract}
  Consider $k$ pairwise disjoint residue classes $a_i \pmod{m_i}$. We prove
  that
  \[
    \max_{1\leq i<j\leq k}\gcd(m_i,m_j)
    \gg k\exp\!\left(-(2+o(1))
      \sqrt{\frac{\log k}{\log\log k}}\right).
  \]
  The proof uses a complete graph whose edges are colored by the gcds of the
  corresponding moduli, together with a structural lemma, a sieve-theoretic
  partition, M\"obius inversion, and the discrete Fourier transform.
\end{abstract}

\maketitle

\section{Introduction}

The greatest common divisor appears in many problems in number theory.  One
example is the study of GCD sums
\[
  \sum_{r,s\leq N}
  \frac{\gcd(n_r,n_s)^{2\alpha}}{(n_rn_s)^\alpha}.
\]
Aistleitner, Berkes and Seip obtained estimates for these sums and applied
them to systems of dilated functions and metric number
theory~\cite{aistleitner-berkes-seip2015}.  Bondarenko and Seip used large GCD
sums in their work on extreme values of the Riemann zeta
function~\cite{bondarenko-seip2017}.  GCD graphs also appear in the proof of
the Duffin--Schaeffer conjecture by Koukoulopoulos and
Maynard~\cite{koukoulopoulos-maynard2020}.  In that proof, bipartite GCD graphs
are used to keep track of pairs of denominators with common prime factors.

Green and Walker proved the following related result~\cite{green-walker2021}.
Let $A\subseteq[X,2X]$ and $B\subseteq[Y,2Y]$.  Suppose that
$\gcd(a,b)\geq D$ for at least $\delta|A||B|$ pairs
$(a,b)\in A\times B$.  Then, for every $\varepsilon>0$ and
$D\leq\min\{X,Y\}$,
\[
  |A||B|\ll_\varepsilon
  \delta^{-2-\varepsilon}\frac{XY}{D^2}.
\]
Thus, if many pairs have gcd at least $D$, then the product $|A||B|$ is
bounded in terms of $X$, $Y$, $D$ and $\delta$.  In this paper we consider a
related question for pairwise disjoint residue classes.  We ask how large
$\gcd(m_i,m_j)$ must be for at least one pair of moduli.

For two residue classes, the relation with the gcd follows from the Chinese remainder theorem.  Namely,
\[
  a_i\pmod{m_i}\quad\text{and}\quad a_j\pmod{m_j}
\]
intersect if and only if
\[
  a_i\equiv a_j\pmod{\gcd(m_i,m_j)}.
\]
In particular, two residue classes with coprime moduli always intersect.
Therefore, in a pairwise disjoint family, one has
$\gcd(m_i,m_j)>1$ for every $i<j$.  This observation does not give a lower
bound depending on $k$, since different pairs may have different common
divisors.

Sun conjectured the following lower bound~\cite{sun2006}:
\begin{equation}\label{eq:sun-conjecture}
  \max_{1\leq i<j\leq k}\gcd(m_i,m_j)\geq k
\end{equation}
for every family of pairwise disjoint residue classes
$a_1\pmod{m_1},\ldots,a_k\pmod{m_k}$.  This bound would be best possible,
since the $k$ distinct residue classes modulo $k$ are pairwise disjoint.

Sun also gave a group-theoretic version of this conjecture in his work on
finite covers of groups~\cite{sun2006}.  Let
$a_1G_1,\ldots,a_kG_k$ be pairwise disjoint left cosets in a group $G$, where
each $G_i$ has finite index.  The conjecture states that
\[
  \gcd\bigl([G:G_i],[G:G_j]\bigr)\geq k
\]
for some $i<j$.  The choice $G=\mathbb Z$ and $G_i=m_i\mathbb Z$ gives
\eqref{eq:sun-conjecture}.  O'Bryant proved the integer conjecture for
$k\leq20$~\cite{obryant2007}.  Zhu proved the group-theoretic conjecture for
$k=3,4$~\cite{zhu2008}.  Sun proved the cases $k=2$ and the case in which $G$
is a finite $p$-group~\cite{sun2006}.

In this paper we prove the following estimate.

\begin{thm}\label{thm:gcd-lower-bound}
  Let
  \[
    a_1 \pmod{m_1}, \ldots, a_k \pmod{m_k}
  \]
  be pairwise disjoint residue classes. Then
  \[
    \max_{1 \leq i < j \leq k} \gcd(m_i, m_j)
    \gg k \exp\!\left(-(2 + o(1))
      \sqrt{\frac{\log k}{\log \log k}}\right).
  \]
\end{thm}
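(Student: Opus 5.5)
Write $D=\max_{i<j}\gcd(m_i,m_j)$; the aim is to show $k\le D\exp\bigl((2+o(1))\sqrt{\log k/\log\log k}\bigr)$. I would model the family as the complete graph $K_k$ on $\{1,\dots,k\}$, coloring the edge $ij$ by $d_{ij}=\gcd(m_i,m_j)$. By the Chinese remainder criterion from the introduction, disjointness means exactly that $a_i\not\equiv a_j \pmod{d_{ij}}$ for every edge. The basic counting tool is a density argument. Pick a large common multiple $M$ of all the $m_i$. The sets $S_i=\{x\bmod M: x\equiv a_i \pmod{m_i}\}$ are pairwise disjoint, so $\sum_i 1/m_i\le 1$. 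This alone bounds nothing, since the $m_i$ may be huge. The point is therefore to pass to a quotient where the moduli are replaced by gcd-type quantities of size at most about $D$.

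\textbf{Structural lemma.} First I would reduce to moduli with controlled prime factorizations. Fix a parameter $y$ and split each $m_i=s_it_i$, where $s_i$ is $y$-smooth and $t_i$ has all prime factors $>y$. A sieve-theoretic partition then groups the indices according to the large-prime part. Two indices $i,j$ whose large-prime parts are coprime must be separated by the smooth parts, so $a_i\not\equiv a_j \pmod{\gcd(s_i,s_j)}$. Conversely, the large primes shared between $t_i$ and $t_j$ force $\gcd(t_i,t_j)$ to be large; each such prime exceeds $y$, and their product is at most $D$. I would pigeonhole to find a subfamily of size $k'\ge k/(\text{number of classes})$ in which every pair is separated by a divisor of a fixed structure. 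The aim is a subfamily where the relevant moduli $n_i=\gcd(m_i,L)$ all divide a fixed $L$ with $\omega(L)$ small. The residues $a_i \bmod n_i$ remain pairwise disjoint, each pairwise gcd is still at most $D$, and the $n_i$ are built from few primes.

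\textbf{Counting via M\"obius inversion and Fourier analysis.} For disjoint classes $a_i \bmod n_i$ with all $n_i\mid L$, I would write the indicator sum
\[
f(x)=\sum_i \mathbf 1_{x\equiv a_i\ (n_i)} \le 1 ,
\]
and expand in additive characters modulo $L$. The Fourier coefficient at frequency $r/L$ is supported on $i$ with $L/\gcd(r,L)\mid n_i$. Applying Parseval to $f\le 1$, i.e. comparing $\sum f^2=\sum f$, gives
\[
\sum_{i,j}\frac{\gcd(n_i,n_j)}{n_in_j}\,\mathbf 1_{a_i\equiv a_j\ (\gcd(n_i,n_j))}=\sum_i\frac1{n_i}.
\]
Disjointness kills the off-diagonal terms in the resulting identity. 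I would then use M\"obius inversion over divisors $d\mid L$ to sum over the gcd-level sets and bound each level's contribution using $d\le D$. Combining these should give $k'\le D\cdot\tau(L)^{O(1)}$, or more precisely $k'\le D\cdot 2^{\omega(L)(1+o(1))}$.

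\textbf{Optimization and main obstacle.} The losses are $2^{\omega(L)}$ from the divisor count and the factor lost in the partition step, roughly $k/k'\le \exp(\log k/\log y)$-type. Taking $\omega(L)\approx\sqrt{\log k\log\log k}$ primes, equivalently $y$ chosen so that both losses equal $\exp\bigl(\sqrt{\log k/\log\log k}\bigr)$, produces the total loss $\exp\bigl((2+o(1))\sqrt{\log k/\log\log k}\bigr)$. The main obstacle is the structural/partition step. It must guarantee that the reduced moduli keep both properties at once: pairwise disjointness, and dependence on only about $\sqrt{\log k/\log\log k}$ "effective" prime powers, while losing only that factor in $k$. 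I would first attempt this by a greedy argument on the edge coloring, repeatedly choosing a prime power dividing many $d_{ij}$. I expect the precise bookkeeping there to be what dictates the constant $2$.
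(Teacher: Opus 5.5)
\textbf{The gap.} Your whole plan rests on one step: extracting a subfamily of size $k'\ge k\exp(-O(\sqrt{\log k/\log\log k}))$ whose moduli can be replaced by $n_i=\gcd(m_i,L)$, with $\omega(L)$ small, while the classes stay pairwise disjoint. This step is not proved, and as stated it is false. Shrinking $m_i$ to a divisor $n_i$ enlarges the class. So $a_i \bmod n_i$ and $a_j \bmod n_j$ remain disjoint only if $L$ retains a prime power of $\gcd(m_i,m_j)$ modulo which $a_i\not\equiv a_j$.

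Take all $k=N$ residue classes modulo $N=\prod_{p\le x}p$, so $\log k\sim x$. Any subfamily that stays disjoint modulo $\gcd(N,L)$ has at most $\gcd(N,L)\le x^{\omega(L)}$ elements. This is $k^{o(1)}$ whenever $\omega(L)\ll\sqrt{\log k\log\log k}$. Conversely, taking $L$ with enough primes to keep a large subfamily makes the factor $\tau(L)$ in your count $k'\le D\tau(L)$ as large as $2^{\pi(x)}=\exp(\Theta(\log k/\log\log k))$. The theorem is trivial in this example since $D=k$; the point is that your intermediate claim fails. Separating the pairs can genuinely require about $\log k/\log\log k$ primes, so the number of primes is the wrong quantity to control.

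There are three further problems:
\begin{enumerate}
\item The $m_i$ are unbounded, so grouping indices by the large-prime parts of $m_i$ gives no a priori bound on the number of classes.
\item Your Parseval identity is vacuous for a disjoint family. The off-diagonal terms vanish and it reads $\sum_i 1/n_i=\sum_i 1/n_i$. The bound $k'\le D\tau(L)$ does hold, but trivially, by grouping equal reduced moduli.
\item The numerology does not close: $\omega(L)\approx\sqrt{\log k\log\log k}$ gives $2^{\omega(L)}=\exp(\Theta(\sqrt{\log k\log\log k}))$, not $\exp(\sqrt{\log k/\log\log k})$.
\end{enumerate}

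\textbf{What the paper does instead.} The paper never passes to a subfamily; it keeps all $k$ classes and makes the problem finite by truncating at $d=D$. Each vertex $v$ is placed in every set $K_n$ with $n\le d$, $n\mid m(v)$, and no proper multiple of $n$ up to $d$ dividing $m(v)$. It gets weight $w(v)=1/\#\{n: v\in K_n\}$, so the total weight is exactly $k$.

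The paper then partitions the index set $[d]$, not the family, into $\exp(o(\sqrt{\log d/\log\log d}))$ classes $C_i$, by small-prime part and by the counts $\Omega_j$ of primes in short multiplicative intervals. The quantity controlled is the gcd sum $\max_{m\in C_i}\sum_{n\in C_i}\gcd(n,m)\le dT_i$, with $T_i\le\exp((2+o(1))\sqrt{\log d/\log\log d})$ via Rankin's trick. This stays small even when many primes occur.

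Finally, consider the weighted sum of $\gcd(n,m)\mathbf 1_{\gcd(n,m)\mid a(v_1)-a(v_2)}$.
\begin{itemize}
\item It is at least $k_i^2$ by M\"obius inversion and Fourier positivity: each $q$-block equals $\sum_{(t,q)=1}|\widehat x(t)|^2\ge 0$.
\item It is at most $O(k_i dT_i\log d)$. Disjointness kills the terms whose edge color equals $\gcd(n,m)$. For the rest, the structural lemma shows that, outside exceptional sets of size $O(\log d)$, a vertex with $r$ edges of other colors into $K_m$ lies in $\gg r/\log d$ distinct sets $K_\ell$, so its weight is $\ll \log d/r$.
\end{itemize}
This weighting, the gcd-sum partition of $[d]$, and the positivity lemma are the ideas your plan is missing.
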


In particular, Theorem~\ref{thm:gcd-lower-bound} implies that
\[
  \max_{i<j}\gcd(m_i,m_j)\geq k^{1-o(1)}.
\]
Thus, we obtain the bound in Sun's conjecture up to a factor $k^{o(1)}$.

We briefly describe the main idea of the proof.  Let
\[
  d=\max_{1\leq i<j\leq k}\gcd(m_i,m_j).
\]
We regard the residue classes as the vertices of a complete graph and color
the edge between two vertices by the gcd of their moduli.  For each
$n\leq d$, we introduce a set $K_n$ of vertices whose moduli are divisible by
$n$, but not by a proper multiple of $n$ which is at most $d$.  A vertex may
belong to several such sets, so we assign weights in such a way that its total
weight over all the sets $K_n$ is equal to $1$.

The key combinatorial input is
Lemma~\ref{lem:relative-structural-decomposition}.  It gives a useful
dichotomy for the edges between $K_m$ and $K_n$ whose colors are different
from $\gcd(m,n)$.  Either one endpoint belongs to a small exceptional set, or,
after fixing a vertex outside the exceptional sets, the assigned weight of
that vertex is small whenever it is incident to many such edges.  Thus the
contribution of these edges is controlled either by the small size of an
exceptional set or by the small weight of a fixed endpoint.

We then partition $[d]$ into a small number of sets $C_i$.  On each $C_i$, a
divisor-sum parameter $T_i$ controls the corresponding weighted gcd sum.  If $k_i$
denotes the total weight carried by the sets $K_n$ with $n\in C_i$, M\"obius
inversion and Fourier orthogonality give a quadratic lower bound $k_i^2$.
The generalized Chinese remainder theorem eliminates the terms whose edge
color is $\gcd(m,n)$, while
Lemma~\ref{lem:relative-structural-decomposition} controls the remaining
terms.  Together they give the matching upper bound
\[
  k_i^2\ll k_i dT_i\log d.
\]
Consequently, $k_i\ll dT_i\log d$.  Summing over the parts of the partition
and using the estimates for their number and for $T_i$, we obtain
\[
  k\ll d\exp\!\left((2+o(1))
    \sqrt{\frac{\log d}{\log\log d}}\right).
\]
Solving this inequality for $d$ gives Theorem~\ref{thm:gcd-lower-bound}.

There is another related problem in which the moduli are assumed to be
distinct and bounded.  Let $f(x)$ be the largest size of a family of pairwise
disjoint arithmetic progressions with moduli satisfying
\[
  2\leq m_1<\cdots<m_k\leq x.
\]
Erd\H{o}s and Stein conjectured that $f(x)=o(x)$.  Erd\H{o}s and
Szemer\'edi proved this conjecture and obtained the first quantitative
estimates~\cite{erdos-szemeredi1968}.  Let
\[
  L(x):=\exp\!\left(\sqrt{\log x\log\log x}\right).
\]
Croot proved that
\[
  xL(x)^{-\sqrt2+o(1)}\leq f(x)
  \leq xL(x)^{-1/6+o(1)},
\]
and proved the upper bound with exponent $1/2$ when the moduli are
square-free~\cite{croot2003}.  Chen proved the same upper bound without the
square-free condition~\cite{chen2005}.  De la Bret\`eche, Ford and Vandehey
later proved
\begin{equation}
  xL(x)^{-1+o(1)}\leq f(x)
  \leq xL(x)^{-\sqrt3/2+o(1)},
\end{equation}
and conjectured the formula~\cite{bdfv2013}
\[
  f(x)=xL(x)^{-1+o(1)}.
\]
A recent preprint of Ho gives a proof of this conjecture~\cite{ho2026}.
Bloom's Erd\H{o}s Problems website now records Problem 202 as
solved~\cite{bloom-erdos202}.

The problem defining $f(x)$ assumes that the moduli are distinct and bounded,
while Theorem~\ref{thm:gcd-lower-bound} has neither assumption.  Combining
the formula for $f(x)$ with Theorem~\ref{thm:gcd-lower-bound} gives the
following consequence.

\begin{cor}
  Let $\mathcal Q_x\subseteq[1,x]\cap\mathbb N$ be a set of distinct moduli
  for which there exist pairwise disjoint residue classes
  $a_q\pmod q$, $q\in\mathcal Q_x$.  Suppose that, as $x\to\infty$,
  \[
    |\mathcal Q_x|=xL(x)^{-1+o(1)}.
  \]
  Then
  \[
    \max_{\substack{q,q'\in\mathcal Q_x\\q\neq q'}}\gcd(q,q')
    \geq xL(x)^{-1+o(1)}.
  \]
  Moreover, there exist distinct $q,q'\in\mathcal Q_x$ and positive integers
  $g,u,v$ such that
  \[
    q=gu,\qquad q'=gv,\qquad \gcd(u,v)=1,
  \]
  and
  \[
    g\geq xL(x)^{-1+o(1)},
    \qquad \max\{u,v\}\leq L(x)^{1+o(1)}.
  \]
  In particular, this holds for every extremal family with
  $|\mathcal Q_x|=f(x)$.
\end{cor}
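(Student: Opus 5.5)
We obtain the corollary by combining Theorem~\ref{thm:gcd-lower-bound} with the structure of the classes forced by the size hypothesis. Write $k=|\mathcal Q_x|=xL(x)^{-1+o(1)}$, and note that $\log k=(1+o(1))\log x$.

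The first claim follows directly from Theorem~\ref{thm:gcd-lower-bound}. Since $\sqrt{\log k/\log\log k}=o(\sqrt{\log x\log\log x})$, the factor $\exp(-(2+o(1))\sqrt{\log k/\log\log k})$ equals $L(x)^{o(1)}$. The theorem therefore gives some pair $q\neq q'$ with
\[
  \gcd(q,q')\gg k\,L(x)^{o(1)}=xL(x)^{-1+o(1)}.
\]

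For the decomposition, take that pair and set $g=\gcd(q,q')$, $u=q/g$, $v=q'/g$. Then $\gcd(u,v)=1$ and $g\geq xL(x)^{-1+o(1)}$. Because $q,q'\leq x$, we get
\[
  u,v\leq x/g\leq L(x)^{1+o(1)}.
\]

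For the final sentence, recall that by Ho's theorem $f(x)=xL(x)^{-1+o(1)}$. Hence every extremal family satisfies the size hypothesis, and the argument above applies to it.

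The only step needing care is the bookkeeping of the $o(1)$ terms. We must check that the theorem's error term is $L(x)^{o(1)}$ uniformly in the regime $k=xL(x)^{-1+o(1)}$. We must also check that the implied constant in $\gg$ is absorbed into $L(x)^{o(1)}$. Both hold because $L(x)\to\infty$.
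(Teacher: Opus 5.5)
Your proof is correct and follows the paper's argument essentially verbatim. You apply Theorem~\ref{thm:gcd-lower-bound} with $k=|\mathcal Q_x|$, use $\log k=(1+o(1))\log x$ to absorb the error factor and the implied constant into $L(x)^{o(1)}$, then set $g=\gcd(q,q')$, $u=q/g$, $v=q'/g$ and bound $\max\{u,v\}\le x/g$; your explicit appeal to $f(x)=xL(x)^{-1+o(1)}$ for the final sentence only spells out what the paper leaves implicit.
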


\begin{proof}
  Let $k=|\mathcal Q_x|$.  By Theorem~\ref{thm:gcd-lower-bound},
  \[
    \max_{\substack{q,q'\in\mathcal Q_x\\q\neq q'}}\gcd(q,q')
    \gg k\exp\!\left(-(2+o(1))
      \sqrt{\frac{\log k}{\log\log k}}\right).
  \]
  Since $\log k=(1+o(1))\log x$, we have
  \[
    \sqrt{\frac{\log k}{\log\log k}}
    =o\!\left(\sqrt{\log x\log\log x}\right).
  \]
  The required bound now follows from
  $k=xL(x)^{-1+o(1)}$.
  Choose distinct $q,q'\in\mathcal Q_x$ satisfying this bound and let
  $g=\gcd(q,q')$, $u=q/g$ and $v=q'/g$.  Then $\gcd(u,v)=1$, while
  \[
    \max\{u,v\}\leq \frac{x}{g}\leq L(x)^{1+o(1)}.
  \]
\end{proof}

Thus every extremal family contains two moduli with a common multiplicative
core of size $xL(x)^{-1+o(1)}$ and comparatively small coprime tails.

The rest of the paper is organized as follows.  In
Section~\ref{sec:reduction}, we introduce the colored graph and its weights,
state the two main propositions, and deduce
Theorem~\ref{thm:gcd-lower-bound}.  In
Section~\ref{sec:sieve-theory}, we prove the sieve-theoretic partition
proposition.  Section~\ref{sec:structural-lemma} is devoted to the key
structural lemma for the colored graph, and in
Section~\ref{sec:weighted-estimate} we prove the
weighted estimate for $k_i$.

\section*{Acknowledgements}

We thank Yuchen Ding and Xiamiao Zhao for helpful comments and discussions.

\section{Proof of Theorem~\ref{thm:gcd-lower-bound}}\label{sec:reduction}

We can naturally rephrase the statement of the theorem. Suppose that:
\begin{equation} \label{eq:max-gcd-d}
  \max_{1 \leq i < j \leq k} \gcd(m_i, m_j) = d
\end{equation}
Then Theorem~\ref{thm:gcd-lower-bound} follows once we prove
\[
  k \ll d \exp\!\left((2 + o(1))
  \sqrt{\frac{\log d}{\log \log d}}\right).
\]

We begin with the graph and weight construction that decodes the structure of a family of residue classes satisfying \eqref{eq:max-gcd-d}. As usual, we will work on colored graph $(V, E)$, where $V = \{(a_s, m_s) : s \in [k] \}$ and every two vertices $v_1, v_2 \in V$ are adjacent with an edge of color $\gcd(m(v_1), m(v_2))$. For each $j \in [d]$, let $L_j$ be the set of vertices $v$ such that $j | m(v)$. For example, it is clear that $L_1 = V$. Let us also assign to $K_j$ for every $j \in [d]$, the set of vertices that is equal to:
\begin{equation}
  L_j \backslash \Bigl( \bigcup_{s = 2}^{\lfloor d/j \rfloor} L_{sj} \Bigr)
\end{equation}
Observe that:
\begin{equation} \label{eq:union-k-j}
  V = \bigcup_{j = 1}^d K_j
\end{equation}
Indeed, for each $v \in V$, let $w \in [d]$ be the largest number so that $v \in L_w$ (for sure, $v \in L_1$ so there must be such a maximal $w$). This means that:
\begin{equation} 
    v \in L_w \backslash \Bigl( \bigcup_{s = 2}^{\lfloor d/w \rfloor} L_{sw} \Bigr) = K_w
\end{equation}
One of the instant conclusions from \eqref{eq:union-k-j} is that 
\begin{equation}
  k \leq \sum_{j = 1}^d |K_j|
\end{equation}
For each vertex $v$, let us introduce the weight $w(v) = \frac{1}{\# \{j \in [d] : v \in K_j \} }$, the simple double counting argument implies that:
\begin{equation}
  k = \sum_{j = 1}^d \sum_{v \in K_j} w(v)
\end{equation}
We will also use the notation: $w(v, n) = \mathbf{1}_{v \in K_n} w(v)$.

We first record the sieve-theoretic input.
\begin{prop} \label{prop:sieve-theoretic-partition}
  For every sufficiently large integer $d$, there exists a partition:
  \begin{equation}
    [d] = \bigsqcup_{i \in \mathcal{I}} C_i
  \end{equation}
  such that the following two conditions are satisfied:
  \begin{enumerate}
    \item The size of the set $|\mathcal{I}| < \exp(o(\sqrt{\frac{\log d}{\log \log d}}))$.
    \item For each $i \in \mathcal{I}$, one has that:
    \begin{equation}\label{eq:Ti-definition}
      T_i := \frac{1}{d} \max_{m \in C_i} \sum_{e | m} \sum_{\substack{n \in C_i \\ \gcd(n, m) = e}} e
    \end{equation}
    is bounded by $\exp((2 + o(1)) \sqrt{\frac{\log d}{\log \log d}})$.
  \end{enumerate}
\end{prop}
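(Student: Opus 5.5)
The plan is to reduce $T_i$ to a divisor-type sum and then choose the classes $C_i$ so that no single $m\in C_i$ has too many divisors $e$ carrying many $n\in C_i$ with $e\mid n$.

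\textbf{The obstruction.} Using $\sum_{e\mid m}\sum_{\gcd(n,m)=e} e=\sum_{n\in C_i}\gcd(m,n)$ and $\gcd(m,n)=\sum_{f\mid\gcd(m,n)}\varphi(f)$, we get
\[
  dT_i=\max_{m\in C_i}\sum_{f\mid m}\varphi(f)\,\#\{n\in C_i:\ f\mid n\}.
\]
Without partitioning, $\#\{n\le d: f\mid n\}\le d/f$ gives $dT\le d\,\tau(m)$. Since $\max_{m\le d}\tau(m)=\exp((\log 2+o(1))\log d/\log\log d)$, this is far too large. The partition must therefore make the count $\#\{n\in C_i: f\mid n\}$ much smaller than $d/f$ whenever $m\in C_i$ has many divisors.

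\textbf{Choice of partition.} I would classify $n\le d$ by its number of distinct prime factors, and possibly also by the size of its $y$-smooth part for a suitable $y$. Concretely, let $C_{r}=\{n\le d:\ \omega(n)=r\}$ for $r\le R$, where $R\asymp \log d/\log\log d$ is the maximal possible value of $\omega$. If needed, refine each $C_r$ into $O(\log d)$ dyadic ranges of the smooth part. Then $|\mathcal I|\le (\log d)^{O(1)}=\exp(O(\log\log d))$, which is $\exp(o(\sqrt{\log d/\log\log d}))$, so condition~(1) is immediate.

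\textbf{Bounding $T_i$.} Fix $m\in C_r$ and group the divisors $e\mid m$ according to $j=\omega(e)$. Then $n=e n'$ with $n'\le d/e$ and $\omega(n')\approx r-j$ (up to primes shared with $e$). A Hardy--Ramanujan/Sathe--Selberg bound $\pi_s(x)\ll x(\log\log x+C)^{s-1}/((s-1)!\log x)$ bounds the $j$-th group by
\[
  d\binom{r}{j}\frac{(\log\log d+C)^{\,r-j-1}}{(r-j-1)!\,\log(d/e)} .
\]
The main obstacle is optimizing this over $r$ and $j$ to extract exactly the exponent $(2+o(1))\sqrt{\log d/\log\log d}$. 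A crude treatment gives roughly $\max_s (rL)^s/(s!)^2\approx \exp(2\sqrt{rL})$, where $L=\log\log d$ and $s=r-j$; with $r\sim\log d/\log\log d$ this is $\exp(2\sqrt{\log d})$, which is too weak. To recover the extra factor $\sqrt{\log\log d}$, I would use the size constraints as well. A divisor $e$ with $j$ prime factors satisfies $e\ge p_1\cdots p_j$, so $n'\le d/e$ is very short, and the counting function of integers $\le x$ with $s$ prime factors decays sharply once $s$ exceeds about $\log x/\log\log x$. Equivalently, the binomial-times-Poisson sum must be restricted to $s\lesssim \log(d/e)/\log\log d$. This coupling is exactly where the smooth-part refinement of the partition should help. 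Balancing $\binom{r}{j}$ against the Poisson factor under this constraint should give $\max_s \binom{r}{s}$ with $s\approx\sqrt{\log d/\log\log d}$, hence $\exp((2+o(1))\sqrt{\log d/\log\log d})$. I expect this optimization, and choosing the refinement so that it works uniformly in $m\in C_i$, to be the genuinely delicate step. The bookkeeping of condition~(1) and the Möbius/$\varphi$ rewriting are routine.
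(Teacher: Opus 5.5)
\textbf{Verdict.} There is a genuine gap. Your reduction $\sum_{n\in C_i}\gcd(m,n)=\sum_{f\mid m}\varphi(f)\,\#\{n\in C_i: f\mid n\}$ is fine, and condition (1) holds automatically for your partition. The entire content of the proposition is condition (2), and there the proposal does not work.

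\textbf{The decisive estimate is missing, and the heuristic for it is wrong.} You never carry out the bound on $T_i$ with constant $2$. The heuristic you offer cannot produce the target: for $r\asymp\log d/\log\log d$ and $s\asymp\sqrt{\log d/\log\log d}$ one has $\log\binom{r}{s}\asymp\sqrt{\log d\,\log\log d}$, far above $2\sqrt{\log d/\log\log d}$. What actually happens is as follows. The $\binom{r}{s}$ divisors $e\mid m$ missing $s$ prime factors of $m$ are each matched with roughly $(d/e)\Lambda^s/s!$ integers $n$ in the class. Here $\Lambda$ is the (Rankin-tilted) sum of $1/p$ over the primes that $n$ may use in place of the missing ones. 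Summing over $s$ gives about $\exp(2\sqrt{r\Lambda})$. So the partition must force the effective $\Lambda$ to be small, in a way balanced against how many prime factors $m$ has at each scale.

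\textbf{Classes defined by $\omega(n)$ do not achieve this.} An $n\in C_r$ with $e\mid n$ can take its remaining prime factors from every scale, so $\Lambda$ stays of order $1$. A Rankin/saddle-point computation (which I have not made fully rigorous) indicates that this costs you the constant $2$. Let $m$ be a product of $r\approx\log d/(\kappa\log\log d)$ primes of size about $(\log d)^{\kappa}$, with $\kappa>1$. Use the weight $(d/n)^{1+\delta}z^{\omega(n)-r}$; the saddle-point method should make the resulting bound sharp up to lower-order terms. This gives
$\sum_{n\in C_r}\gcd(m,n)=d\exp\bigl((c_\kappa+o(1))\sqrt{\log d/\log\log d}\bigr)$, where $c_\kappa=2\bigl(\kappa^{-1}\min_{t>0}e^{t}E_1(t/(2\kappa))\bigr)^{1/2}$ and $E_1(x)=\int_x^\infty e^{-u}u^{-1}\,du$. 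For $\kappa=1.15$ this gives $c_\kappa\approx 2.47>2$. So your concrete partition appears to give only a weaker constant. The smooth-part refinement is left unspecified and unanalysed, so it does not close the gap.

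\textbf{The missing idea is the paper's multiscale profile.} Fix the very-small-prime part $a(n)$ and the counts $\Omega_j(n)$ in each short box $\mathcal P_j=(e^{U_j},e^{(1+\eta)U_j}]$, with $\eta=(\log\log d)^{-1/2}$.
\begin{itemize}
\item Replacement primes must then lie in the same box.
\item Rankin's trick with a separate $z_j$ for each box bounds $\log(\mathcal T_i(m)/d)$ by $2\sum_j\sqrt{H_j\omega_j(m)}$ plus acceptable errors, where $H_j=\sum_{p\in\mathcal P_j}1/p\approx\eta\to0$.
\item Cauchy--Schwarz, with $\sum_jU_j\omega_j(m)\le\log d$ and $\sum_{U_j\ge U_*}H_j/U_j\le(1+o(1))/\log\log d$, gives exactly the constant $2$.
\item The cost is only $\exp(O((\log\log d)^{5/2}))$ classes.
\end{itemize}
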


\begin{prop}\label{prop:weighted-partition-bound}
  With the notation above, let
  $[d]=\bigsqcup_{i\in\mathcal I}C_i$ be any partition, and for each
  $i\in\mathcal I$, let $T_i$ be as in \eqref{eq:Ti-definition}, and define
  \begin{equation}\label{eq:ki-definition}
    k_i:=\sum_{n\in C_i}\sum_{v\in V}w(v,n)
  \end{equation}
  Then, for every $i\in\mathcal I$,
  \[
    k_i\ll dT_i\log d.
  \]
\end{prop}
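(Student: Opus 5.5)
The plan is to compare a lower and an upper bound for one weighted quadratic form. For $m,n\in C_i$ write $g=\gcd(m,n)$, and set
\[
  S_i:=\sum_{m,n\in C_i}\sum_{u,v\in V} w(u,m)\,w(v,n)\,\gcd(m,n)\,\mathbf 1\bigl[a(u)\equiv a(v)\pmod{\gcd(m,n)}\bigr].
\]
The point of the factor $\gcd(m,n)$ is that, for $g=\gcd(m,n)$,
\[
  g\,\mathbf 1[g\mid x]=\sum_{e\mid g}c_e(x)=\sum_{e\mid g}\ \sum_{\substack{h \bmod e\\ (h,e)=1}} e(hx/e),
\]
where $c_e$ is the Ramanujan sum. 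Inserting this into $S_i$ and swapping sums (the condition $e\mid\gcd(m,n)$ factors as $e\mid m$ and $e\mid n$) turns $S_i$ into a sum of squares:
\[
  S_i=\sum_{e\geq1}\ \sum_{\substack{h \bmod e\\(h,e)=1}}\Bigl|\sum_{\substack{n\in C_i\\ e\mid n}}\sum_{v}w(v,n)\,e\bigl(h\,a(v)/e\bigr)\Bigr|^2 .
\]
All terms are nonnegative, and the $e=1$ term alone equals $k_i^2$. This gives $S_i\geq k_i^2$; it is the Möbius/Fourier part and is routine.

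For the upper bound I would split $S_i$ into the diagonal $u=v$ and the off-diagonal $u\neq v$.

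\emph{Diagonal terms.} These contribute
\[
  \sum_v\sum_{m,n\in C_i} w(v,m)\,w(v,n)\gcd(m,n).
\]
Fix $v$ and $m$ with $v\in K_m$. We have $w(v,n)\leq w(v)\leq1$, and $\gcd(m,n)=e$ for some $e\mid m$. So the inner sum over $n$ is at most $\sum_{e\mid m}\sum_{n\in C_i,\,\gcd(n,m)=e}e\leq dT_i$. Summing the remaining weight $w(v,m)$ over $v$ and $m$ then bounds the diagonal by $k_i\,dT_i$.

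\emph{Off-diagonal terms.} Let $u\in K_m$, $v\in K_n$ with $u\neq v$, and let $c=\gcd(m(u),m(v))\leq d$ be the edge colour. Since $m\mid m(u)$ and $n\mid m(v)$, we get $\gcd(m,n)\mid c$. Disjointness and the generalized Chinese remainder theorem give $a(u)\not\equiv a(v)\pmod c$. Hence if $c=\gcd(m,n)$ the indicator vanishes. The only surviving terms are edges between $K_m$ and $K_n$ whose colour is a proper multiple of $\gcd(m,n)$.

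The main obstacle is bounding these surviving terms by $O(k_i\,dT_i\log d)$. This is exactly the situation of Lemma~\ref{lem:relative-structural-decomposition}, applied for each pair $(m,n)$. The lemma produces small exceptional sets; outside them, a fixed vertex incident to many bad edges has small weight $w(\cdot,m)$. So I would split the bad edges into two kinds.

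\begin{enumerate}[label=(\alph*)]
  \item Edges with an endpoint in an exceptional set. These are counted using the size bound on that set. After weighting by $\gcd(m,n)=e$ and summing over $n\in C_i$ with $\gcd(n,m)=e$, this again reproduces the quantity $dT_i$.
  \item Edges with a fixed endpoint outside the exceptional sets. Here the small weight of the fixed endpoint compensates for its large bad-degree.
\end{enumerate}

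I expect the $\log d$ to come from summing over the possible colours $c=te$ with $t\geq2$ and $te\leq d$: a harmonic sum $\sum_{t\leq d/e}1/t$, possibly combined with a dyadic decomposition of the bad-degree. The delicate point is organising the sum so that each vertex $u$ is charged only through its total weight $\sum_m w(u,m)$. That total is at most $1$, which yields a factor $k_i$ rather than $k$.

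Combining the two bounds gives $k_i^2\leq S_i\ll k_i\,dT_i\log d$, and dividing by $k_i$ gives $k_i\ll dT_i\log d$.
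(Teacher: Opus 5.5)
Your lower bound is correct and is the paper's argument in a cleaner form: your $e$-term $\sum_{(h,e)=1}|\cdot|^2$ is exactly the $q$-block that Lemma~\ref{lem:mobius-version-of-am-qm} shows equals $\sum_{(t,q)=1}|\widehat x(t)|^2$. Your diagonal bound, the Chinese-remainder elimination of edges coloured $\gcd(m,n)$, and case (a) also match the paper; in (a) the $\log d$ is just $|S_n|\le\omega(m)+1$, used together with $w(v_1,n)\le 1$.

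The genuine gap is case (b), which you flag as the main obstacle but leave as a heuristic. Lemma~\ref{lem:relative-structural-decomposition} controls only the weight of the \emph{fixed} endpoint: if $v_1\in K_n\setminus S_n$ has $r$ bad neighbours in $K_m\setminus S_m$, then $r\,w(v_1)\ll\log d$. The summand, however, is the product $w(v_1,n)\,w(v_2,m)$. Suppose you fix $v_1$ and bound $w(v_2,m)\le 1$. Then the lemma consumes the only factor of $w(v_1)$, and you are left with $\ll \log d\sum_{m\in C_i}\gcd(n,m)\le dT_i\log d$ for \emph{each} $v_1\in K_n$. The total is $dT_i\log d\sum_{n\in C_i}|K_n|$, which can exceed $k_i$ by the multiplicities $\#\{j:v\in K_j\}$ that the weights were introduced to cancel, so $k_i^2\ll\cdots$ does not close.

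The missing idea is the paper's AM--GM step $w(v_1,n)w(v_2,m)\le\tfrac12\bigl(w(v_1,n)^2+w(v_2,m)^2\bigr)$. By the symmetry of the sum (and of the pair $S_n,S_m$), it suffices to bound the $w(v_1,n)^2$ part. For fixed $n,v_1,m$, the sum over bad $v_2$ is then $r\,w(v_1,n)^2\ll w(v_1,n)\log d$: one factor of weight is absorbed by the lemma and the other survives. Summing over $m\in C_i$ with weight $\gcd(n,m)$ gives at most $dT_i\log d\,w(v_1,n)$. Summing over $v_1$ and $n$ then gives $\ll k_i\,dT_i\log d$.

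Your guess about where the $\log d$ comes from is also off: no harmonic sum over colours $te$ and no dyadic decomposition of the bad-degree is involved. In case (b) the $\log d$ is produced inside Lemma~\ref{lem:relative-structural-decomposition}. The excess factors $e_i\ge 2$ of the bad colours at $v_1$ are pairwise coprime. Grouping them into consecutive blocks with product at most $d/\gcd(n,m)$ gives $u$ blocks, each of length at most $\log_2 d$. Each block places $v_1$ in a different set $K_\ell$, so $w(v_1)\le 1/u$ while $r\ll u\log d$.
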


\begin{proof}[Proof of Theorem~\ref{thm:gcd-lower-bound}, assuming
  Propositions~\ref{prop:sieve-theoretic-partition}
  and~\ref{prop:weighted-partition-bound}]
  Let $d$ be as in \eqref{eq:max-gcd-d}. If $d>k$, the theorem is immediate. We may therefore assume $d\leq k$.
  Let $[d]=\bigsqcup_{i\in\mathcal I}C_i$ be the partition supplied by Proposition \ref{prop:sieve-theoretic-partition}. For each $i\in\mathcal I$, let $k_i$ be as in \eqref{eq:ki-definition}. Since $k=\sum_{i\in\mathcal I}k_i$, Proposition \ref{prop:weighted-partition-bound} gives
  \[
    k
    \ll d\log d \sum_{i\in\mathcal I}T_i
    \leq d\log d\,|\mathcal I|\max_{i\in\mathcal I}T_i.
  \]
  By the two conclusions of Proposition \ref{prop:sieve-theoretic-partition},
  \[
    |\mathcal I|\leq \exp\!\left(o\!\left(\sqrt{\frac{\log d}{\log\log d}}\right)\right),
    \qquad
    \max_i T_i\leq \exp\!\left((2+o(1))\sqrt{\frac{\log d}{\log\log d}}\right).
  \]
  Since $\log d=\exp(o(\sqrt{\log d/\log\log d}))$, this gives
  \[
    k\ll d\exp\!\left((2+o(1))\sqrt{\frac{\log d}{\log\log d}}\right).
  \]
  By the eventual monotonicity of $\sqrt{\log x/\log\log x}$, the last bound implies
  \[
    d\gg k\exp\!\left(-(2+o(1))\sqrt{\frac{\log k}{\log\log k}}\right),
  \]
  which is the desired estimate.
\end{proof}

\section{A sieve-theoretic partition}
\label{sec:sieve-theory}

We use a sieve-theoretic idea to construct the partition in
Proposition~\ref{prop:sieve-theoretic-partition}. We separate the very small
prime factors of each integer and divide the remaining primes into short
intervals. We then group the integers according to their small-prime parts and
the numbers of prime factors, counted with multiplicity, in these intervals.
This multiscale decomposition produces only a small number of classes, while
the resulting uniformity within each class allows us to control $T_i$.

Assume that $d$ is sufficiently large. Let:
\[
 \eta=(\log \log d)^{-1/2},
\]
letting $J \ll (\log \log d)^{3/2}$, and define
\[
 U_0=\lfloor \log \log \log d \rfloor \quad \text{and} \quad U_{j+1}=(1+\eta)U_j\qquad(0 \leq j \leq J).
\]
For $0\le j<J$, let
\begin{equation}
 \mathcal P_j=
 \{p\text{ prime}:\mathrm e^{U_j}<p\le
 e^{U_{j+1}}\},
\end{equation}
Let $\Omega_j(n)$ denote the total number of prime factors of $n$ in $\mathcal P_j$. For given $n$, define its very small prime part by
\begin{equation}
 a(n)=\prod_{p\le \mathrm e^{U_0}}p^{v_p(n)},
\end{equation}
For each prime $p \leq e^{U_0}$ there are at most $\log d$ possible choices for $v_p(n)$ , therefore $a(n)$ belongs to the set of size $(\log d)^{\log \log d} = \exp((\log \log d)^2)$. For a possible value $a$ and a vector
$\mathbf r=(r_0,\ldots,r_{J-1})$, let
\begin{equation}\label{eq:multiscale-class}
 i = (a,\mathbf r)=
 \{n\le d:a(n)=a,\ \Omega_j(n)=r_j\text{ for every }j\}.
\end{equation}

\begin{lem}
The number of nonempty classes in \eqref{eq:multiscale-class} is bound by
\[
\exp((\log \log d)^2) \exp((\log \log d)^{5/2}) \ll \exp((1 + o(1)) (\log \log d)^{5/2}).
\]
\end{lem}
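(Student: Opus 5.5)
The count factors as (number of possible values of $a$) times (number of possible vectors $\mathbf r$). I bound the two factors separately and multiply.

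\emph{The factor coming from $a$.} Every $n\le d$ determines $a(n)$, which is a product of prime powers $p^{v_p(n)}$ with $p\le e^{U_0}\le \log\log d$. Two facts control the choices:
\begin{itemize}
\item there are at most $e^{U_0}\le\log\log d$ such primes;
\item each exponent satisfies $0\le v_p(n)\le \log_2 d$, so it has at most $\log d$ possible values (after absorbing constants).
\end{itemize}
Hence the number of possible $a$ is at most
\[
(\log d)^{\log\log d}=\exp\bigl((\log\log d)^2\bigr),
\]
as the paper already notes just before the statement.

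\emph{The factor coming from $\mathbf r$.} For $n\le d$ we have $\sum_j \Omega_j(n)\le\Omega(n)\le \log_2 d$. So each coordinate satisfies $0\le r_j\le \log_2 d$ and has $O(\log d)$ possible values. There are $J\ll(\log\log d)^{3/2}$ coordinates, so the number of vectors $\mathbf r$ is at most
\[
(O(\log d))^{J}=\exp\bigl(J(\log\log d+O(1))\bigr)\le \exp\bigl(C(\log\log d)^{5/2}\bigr)
\]
for some constant $C$. Here $C$ comes from the implied constant in $J\ll(\log\log d)^{3/2}$.

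\emph{Normalizing the constant.} To obtain the exponent $(1+o(1))(\log\log d)^{5/2}$, I need to pin down $J$. The only place $J$ matters is that $U_J$ should reach about $\log d$, so that every prime up to $d$ lies in some $\mathcal P_j$. This requires
\[
J\approx \frac{\log(\log d/U_0)}{\log(1+\eta)}\sim \frac{\log\log d}{\eta}=(\log\log d)^{3/2}.
\]
So we may take $J=(1+o(1))(\log\log d)^{3/2}$. Then the $\mathbf r$-count is $\exp\bigl((1+o(1))(\log\log d)^{5/2}\bigr)$.

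A sharper alternative avoids the constant issue entirely. Nonnegative vectors with total sum at most $\log_2 d$ number at most $\binom{\log_2 d+J}{J}\le (e(\log_2 d+J)/J)^J$. This equals $\exp\bigl(J\log(\log d/J)+O(J)\bigr)$, which is again $\exp\bigl((1+o(1))(\log\log d)^{5/2}\bigr)$ and is in fact slightly smaller.

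\emph{Combining.} Multiplying the two factors gives
\[
\exp\bigl((\log\log d)^2\bigr)\exp\bigl((1+o(1))(\log\log d)^{5/2}\bigr).
\]
The first factor is absorbed into the $o(1)$, since $(\log\log d)^2=o\bigl((\log\log d)^{5/2}\bigr)$.

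The only delicate point is this constant bookkeeping: the bound $J\ll(\log\log d)^{3/2}$ as literally written leaves an unspecified constant in the exponent, and fixing $J$ as above resolves it. In any case, the paper uses this lemma only to get $|\mathcal I|=\exp\bigl(o(\sqrt{\log d/\log\log d})\bigr)$, and any bound of the form $\exp\bigl(O((\log\log d)^{5/2})\bigr)$ suffices for that.
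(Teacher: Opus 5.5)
Your argument is correct and is essentially the paper's: you bound the number of small-prime parts $a$ by $\exp((\log\log d)^2)$, bound the number of vectors $\mathbf r$ crudely by $(O(\log d))^J$, and multiply, and your choice $J=(1+o(1))(\log\log d)^{3/2}$ makes explicit the normalization the paper leaves implicit when it writes $(\log d)^J=\exp((\log\log d)^{5/2})$. One small caveat on your side remark: the binomial bound does not by itself remove the dependence on the constant in $J$, since its exponent is still linear in $J$; what does remove it is the observation that $r_j=0$ whenever $e^{U_j}\ge d$, so only $(1+o(1))(\log\log d)^{3/2}$ coordinates can ever be nonzero.
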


\begin{proof}
Obviously $r_j \in [0, \log d]$ and $\sum_{1 \leq j \leq J}r_j = \Omega(n)-\Omega(a) \ll \log d$.
Hence, the number of solutions for 
\[
\sum_{1 \leq j \leq J}r_j \leq \log d
\]
is bound by 
\[
(\log d)^{J} = \exp((\log \log d)^{5/2})
\]
\end{proof}

For any fixed $m \in C_i$ where $i = (a, \mathbf{r}) \in \mathcal{I}$, define the larger auxiliary divisor sum
\[
\mathcal T_i(m):=\sum_{e | m} \sum_{\substack{n \in C_i \\ e | n }} e.
\]
It is enough to bound $\mathcal T_i(m)$ uniformly in $m\in C_i$, because the condition $\gcd(n,m)=e$ in the definition of $T_i$ implies $e|n$, and hence
\[
  dT_i\leq \max_{m\in C_i}\mathcal T_i(m).
\]
Write $m=a \prod_{j=1}^{J}b_j$ and $n=ac=a \prod_{j=1}^{J}c_j$ with $p \mid b_jc_j \Rightarrow p \in \mathcal P_j$
\begin{align}
\sum_{e | m} \sum_{\substack{n \in C(a,\mathbf r) \\ e | n }} e
&=\sum_{e_0 | a} e_0 \sum_{e_1 \mid b_1} e_1\cdots
  \sum_{e_J \mid b_J}e_J \notag\\
&\qquad{}\times
  \#\{c \leq d/a: \Omega_j(c)=r_j,\ e_j \mid c_j \} \notag\\
&=\sum_{e_0 | a} e_0 \sum_{e_1 \mid b_1} e_1\cdots
  \sum_{e_J \mid b_J}e_J \notag\\
&\qquad{}\times
  \#\{q \leq d/(a\prod_{j=1}^{J}e_j):
  \Omega_j(c)=r_j-\Omega(e_1) \}.
\end{align}

So we need some tools from analytic number theory to estimate the counting problem.

Let:
\[
 A(X,\mathbf t)=
 \#\{q\le X:p\mid q\Rightarrow p\in\bigcup_{j\in J}\mathcal P_j,\
       \ \Omega_j(q)=t_j\text{ for every }j\in J\}.
\]
Thus $q$ is allowed to use primes only from the active boxes.  Define
\begin{equation}\label{eq:HjQj}
 H_j=\sum_{p\in\mathcal P_j}\frac1p,
 \qquad
 Q_j=\sum_{p\in\mathcal P_j}\frac1{p^2}.
\end{equation}

\begin{lem}\label{lem:multi-moment}
Suppose that, for every $j\in J$, a real number $z_j$ is chosen with
\begin{equation}
 2\le z_j\le \frac{\mathrm e^{U_j}}2.
\end{equation}
Then
\begin{equation}
 A_I(X,\mathbf t)
 \le X\prod_{j\in J} z_j^{-t_j}
 \exp\!\left(\sum_{j\in J} z_jH_j
              +2\sum_{j\in J}z_j^2Q_j\right).
\end{equation}
\end{lem}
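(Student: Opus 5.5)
This is a Rankin-type (multivariate moment) bound. We drop the condition $q\le X$ in favour of the weight $X/q\ge 1$ on $q\le X$, and we detect the conditions $\Omega_j(q)=t_j$ by the weight $\prod_j z_j^{\Omega_j(q)-t_j}\ge 1$. This weight is admissible when $\Omega_j(q)=t_j$ exactly; note that it also handles "at least $t_j$" since $z_j\ge 2>1$. Hence
\[
 A_I(X,\mathbf t)\le \sum_{\substack{q\le X\\ \Omega_j(q)=t_j\ \forall j}} 1
 \le X\prod_{j\in J}z_j^{-t_j}\sum_{q} \frac{1}{q}\prod_{j\in J} z_j^{\Omega_j(q)},
\]
where the last sum runs over all $q$ composed only of primes in $\bigcup_{j\in J}\mathcal P_j$. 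The primes involved are in disjoint boxes and the summand is completely multiplicative, so the sum factors as an Euler product:
\[
 \prod_{j\in J}\prod_{p\in\mathcal P_j}\Bigl(1-\frac{z_j}{p}\Bigr)^{-1}.
\]
The factor is well defined because $p>\mathrm e^{U_j}\ge 2z_j$, so $z_j/p\le 1/2$.

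It remains to bound each local factor. With $x=z_j/p\in(0,1/2]$ we have
\[
 -\log(1-x)=x+\sum_{k\ge2}\frac{x^k}{k}\le x+\frac{x^2}{2}\cdot\frac{1}{1-x}\le x+x^2 .
\]
This gives
\[
 \prod_{p\in\mathcal P_j}\Bigl(1-\frac{z_j}{p}\Bigr)^{-1}\le \exp\bigl(z_jH_j+z_j^2Q_j\bigr),
\]
with $H_j,Q_j$ as in \eqref{eq:HjQj}. This is even stronger than the stated constant $2$ in front of $\sum_j z_j^2Q_j$. Multiplying over $j\in J$ yields the claimed inequality.

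No step is a genuine obstacle; the only points needing care are the following.
\begin{enumerate}
\item The hypothesis $z_j\le \mathrm e^{U_j}/2$ is exactly what is needed to make the Euler product converge and to control the quadratic term uniformly.
\item The statement's notation ($A_I$ versus $A$, $j\in J$ indexing the active boxes) must be read consistently: $q$ is supported on the active boxes only, so every factor of the Euler product corresponds to some $j\in J$.
\item If one prefers to bound the tail $\sum_{k\ge 2}x^k/k$ more crudely, say by $x^2/(1-x)\le 2x^2$, one obtains precisely the stated constant $2$.
\end{enumerate}
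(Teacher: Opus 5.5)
Your proof is correct and follows the paper's argument: Rankin's trick with the weight $\frac{X}{q}\prod_j z_j^{\Omega_j(q)-t_j}$, then the Euler product $\prod_{j}\prod_{p\in\mathcal P_j}(1-z_j/p)^{-1}$ over the active boxes, then a Taylor bound on the logarithm. Your explicit estimate $-\log(1-x)\le x+x^2$ for $0<x\le 1/2$ supplies the step the paper leaves as ``taking log and the Taylor expansion,'' and it yields the constant $1$ in place of the stated $2$.
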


\begin{proof}
By Rankin's trick,
\[
 A_I(X,\mathbf t)\prod_{j\in J}z_j^{t_j}
 \le X \sum_{p \mid q\Rightarrow
        p\in\bigcup_{j\in J}\mathcal P_j}
      \frac{\prod_{j\in J}z_j^{\Omega_j(q)}}{q}
\]
By multiplicativity, 
\[
 A_I(X,\mathbf t)\prod_{j\in J}z_j^{t_j}
 \le X\prod_{j\in J}\prod_{p\in\mathcal P_j}
 \left(1-\frac{z_j}{p}\right)^{-1}.
\]
The claim follows by taking log and the Taylor expansion.
\end{proof}

\begin{proof}[Proof of Proposition~\ref{prop:sieve-theoretic-partition}]
Applying Lemma~\ref{lem:multi-moment}, with
$t_j=r_j-\Omega(e_j)$, gives
\begin{align*}
 \left(\prod_{j=1}^{J}e_j\right)
 \#\{c:e_j\mid c_j\text{ for all }j\}
 &\le \frac da\prod_jz_j^{-(r_j-\Omega(e_j))}\\
 &\qquad{}\times
 \exp\!\left(\sum_jz_jH_j+2\sum_jz_j^2Q_j\right).
\end{align*}
Let $\sigma(a)=\sum_{e_0\mid a} e_0$ denote the sum-of-divisors function. Summing over all
small-part divisors $e_0\mid a$ and all $e_j\mid b_j$, we obtain
\begin{align}
&\sum_{e\mid m}e\,\#\{n\in C:e\mid n\}\notag\\
&\le d\frac{\sigma(a)}a
 \exp\!\left(\sum_jz_jH_j+2\sum_jz_j^2Q_j\right)
 \prod_j
 \sum_{e_j\mid b_j}z_j^{-(r_j-\Omega(e_j))}.
\label{eq:class-master-raw}
\end{align}

We simplify the last divisor sum.  Replace $e_j$ by the complementary divisor
$f_j=b_j/e_j$.  Since $\Omega(b_j)=r_j$,
\begin{align*}
 \sum_{e_j\mid b_j}z_j^{-(r_j-\Omega(e_j))}
 &=\sum_{f_j\mid b_j}z_j^{-\Omega(f_j)}\\
 &=\prod_{p^\alpha\parallel b_j}
   (1+z_j^{-1}+\cdots+z_j^{-\alpha})\\
 &\le (1-z_j^{-1})^{-\omega_j(b)}.
\end{align*}
where:
\begin{equation} \label{eq:omega-j-definition}
  \omega_j(b) = \# \{ p \in \mathcal{P}_j : p | b \}
\end{equation}
The small-prime factor is harmless because the set $p\le\mathrm e^{U_0}$ is fixed:
\[
 \frac{\sigma(a)}a
 \le\prod_{p\le\mathrm e^{U_0}}(1-1/p)^{-1} \ll \log \log \log d.
\]
Finally, for $z\ge2$,
\[
 -\log(1-z^{-1})\le z^{-1}+2z^{-2}.
\]
Taking logarithms in \eqref{eq:class-master-raw} yields the central inequality
\begin{equation}\label{eq:central-inequality}
 \log\frac{\mathcal T_i(m)}d
 \le O(1)
 +\sum_j\left(z_jH_j+\frac{\omega_j(b)}{z_j}\right)
 +2\sum_jz_j^2Q_j
 +2\sum_j\frac{\omega_j(b)}{z_j^2}.
\end{equation}
Choose
\begin{equation}
 z_j=\max\left\{2,\sqrt{\frac{\omega_j(b)}{H_j}}\right\}.
\end{equation}
One can easily verify that $z_j$ is at most $\frac{e^{U_j}}{2}$, as we have insisted before.
Therefore:
\begin{equation}\label{eq:main-reduction}
 \sum_j\left(z_jH_j+\frac{\omega_j(b)}{z_j}\right)
 \le 2\sum_j\sqrt{H_j\omega_j(b)}+4\sum_jH_j.
\end{equation}
By the Mertens' second theorem:
\[
 \sum_jH_j\le\sum_{p\le d}\frac1p \ll \log \log d.
\]
Also:
\begin{equation}
  2 \sum_j \frac{\omega_j(b)}{z_j^2} \leq 2 \sum_j H_j
\end{equation}
and:
\begin{equation}
  \begin{aligned}
    \sum_{j} z_j^2 Q_j
    &\leq 4\sum_j Q_j+\sum_j \frac{\omega_j(b) Q_j}{H_j}\\
    &\ll 1+\sum_{j : U_j \leq 2 \log \log d}
      \frac{|\mathcal{P}_j| H_j}{e^{U_j} H_j}
      + \frac{\log d}{(\log d)^2}\\
    &\ll \sum_{j : U_j \leq 2 \log \log d} e^{\eta U_j}\\
    &\ll (\log \log d)^{5/2}
      \exp(2 (\log \log d)^{1/2})
  \end{aligned}
\end{equation}
due to the fact that $\sum_j \omega_j(b) \leq \log d$ and $Q_j \leq e^{-U_j} H_j$ by the definitions of $Q_j, H_j$ and $\omega_j(b)$ (see \eqref{eq:HjQj} and \eqref{eq:omega-j-definition}).
Thus, the remaining task is to estimate $\sum_j\sqrt{H_j\omega_j(b)}$.

Let:
\begin{equation}
 U_*=\log \log d-4\sqrt{\log \log d}.
\end{equation}

If $U_j<U_*$, then
\[
 H_j\le |\mathcal{P}_j|\mathrm e^{-U_j},
 \qquad \omega_j(b)\le |\mathcal{P}_j|.
\]
Therefore:
\begin{equation}\label{eq:low-onebox}
 \sqrt{H_j\omega_j(b)}\le |\mathcal{P}_j|\mathrm e^{-U_j/2}.
\end{equation}
If $p\in\mathcal P_j$, then
\[
 \log p\le U_{j+1}=(1+\eta)U_j,
\]
so
\[
 \mathrm e^{-U_j/2}\le p^{-1/(2(1+\eta))}.
\]
All primes in the low boxes are at most
\[
 X=\exp((1+\eta)U_*).
\]
Using \eqref{eq:low-onebox} and then enlarging a prime sum to an integer sum,
\begin{align*}
 \sum_{U_j<U_*}\sqrt{H_j\omega_j(b)}
 &\le \sum_{p\le X}p^{-1/(2(1+\eta))}\\
 &\ll \frac{1}{\log X} X^{1-\frac{1}{2(1+\eta)}}\\
 &\ll \frac{1}{\log \log d}
 \exp\!\left(\left(\frac12-\eta-4\eta^2\right)\log\log d\right)\\
 &= \frac{(\log d)^{1/2} \exp(- \eta \log \log d - 4 \eta^2 \log \log d) }{\log \log d} \\
 &\leq \frac{(\log d)^{1/2}}{\log \log d}
\end{align*}
For the remaining boxes, Cauchy--Schwarz gives
\begin{align}
 \sum_{U_j\ge U_*}\sqrt{H_j\omega_j(b)}
 &\le
 \left(\sum_{U_j\ge U_*}U_j\omega_j(b)\right)^{1/2}
 \left(\sum_{U_j\ge U_*}\frac{H_j}{U_j}\right)^{1/2}.
\label{eq:CS-main}
\end{align}
We estimate the two factors separately.

Every distinct prime counted by $\omega_j(b)$ is larger than $\mathrm e^{U_j}$. Hence
\begin{equation}\label{eq:log-budget}
 \sum_jU_j\omega_j(b)
 \le\sum_{p\mid b}\log p
 =\log\operatorname{rad}(b)
 \le\log b
 \le \log d.
\end{equation}
If $p\in\mathcal P_j$, then $\log p\le(1+\eta)U_j$, and therefore
\[
 \frac1{U_j}\le\frac{1+\eta}{\log p}.
\]
By the prime number theorem,
\begin{align}
 \sum_{U_j\ge U_*}\frac{H_j}{U_j}
 &\le(1+\eta)\sum_{p>\mathrm e^{U_*}}\frac1{p\log p}\notag\\
 &=\frac{1+o(1)}{U_*}
 =\frac{1+o(1)}{\log \log d}.
\label{eq:tail-budget}
\end{align}
Combining \eqref{eq:log-budget} and \eqref{eq:tail-budget} in
\eqref{eq:CS-main}, we obtain
\begin{equation}
 \sum_{U_j\ge U_*}\sqrt{H_j\omega_j(b)}
 \le(1+o(1))\sqrt{\frac{\log d}{\log \log d}}.
\end{equation}
Combining this with \eqref{eq:central-inequality}, \eqref{eq:main-reduction}, and the estimates above gives, uniformly in $m\in C_i$,
\[
  \log\frac{\mathcal T_i(m)}d
  \leq (2+o(1))\sqrt{\frac{\log d}{\log\log d}}.
\]
Thus $\max_{m\in C_i}\mathcal T_i(m)\leq d\exp((2+o(1))\sqrt{\log d/\log\log d})$, and the desired bound for $T_i$ follows from $dT_i\leq \max_{m\in C_i}\mathcal T_i(m)$.
\end{proof}

\begin{rmk}
  One can split $[d]$ differently, so that we almost reach desired bounds. For instance let $L = \lfloor \log d \rfloor$ and for each $i \in \mathbb{Z}_+$ let:
  \begin{equation}
    \omega_i(n) = \# \{p \in \mathbb{P} : v_p(n) \geq i \}
  \end{equation}
  Then let
  \begin{equation}
    \mathcal{I} = \{ (k_1, \ldots, k_L) \in \mathbb{Z}_{+}^L : \text{ there exists } n \in [d] : \text{ for all } i \in [L], \omega_i(n) = k_i \}
  \end{equation}
  and
  \begin{equation}
    C_i = \{ n \in [d] : \text{ for all } i \in [L], \omega_i(n) = k_i \}
  \end{equation}
  Then one can use Ramanujan-Hardy result on number of partitions to estimate the size of $|\mathcal{I}|$ and standard bound on partial sum of values of divisor function together with elementary enumerative argument to obtain that:
  \begin{equation}
    T_i \ll \exp((2 + o(1)) \sqrt{\log d \log \log d})
  \end{equation}
  That's said, one can obtain very close estimate without relying on almost any analytic number theory.
\end{rmk}

\section{The key structural lemma for the colored graph}
\label{sec:structural-lemma}

The lemma below gives the dichotomy needed to control the edges between
$K_m$ and $K_n$ whose colors are different from $\gcd(m,n)$. Either an
endpoint lies in one of two small exceptional sets, or a fixed endpoint
outside these sets has weight inversely proportional, up to a factor of
$\log d$, to the number of such edges incident to it.

\begin{lem} \label{lem:relative-structural-decomposition}
  Fix two integers $m, n \in [d]$. Then there exist two subsets $S_m$ and $S_n$ of $K_m$ and $K_n$, respectively, such that the following two sentences are true:
  \begin{enumerate}
    \item $|S_m| \leq \omega(n) + 1$ and $|S_n| \leq \omega(m) + 1$. Here, as usual $\omega(n)$ represents the number of distinct prime divisors of $n$.
    \item For every $v \in K_n \backslash S_n$, denote by $r$ the number of edges from $v$ to $K_m \backslash S_m$ with color different from $\gcd(m, n)$. Then, with the convention that $\frac{\log d}{0}=+\infty$:
    \begin{equation}
      w(v) \ll \min(1, \frac{\log d}{r})
    \end{equation}
  \end{enumerate}
\end{lem}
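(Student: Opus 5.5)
The plan is to turn the condition ``color different from $\gcd(m,n)$'' into a divisibility statement about $m(v)$, and then charge each such edge to one of the indices $j$ with $v\in K_j$. Since $1/w(v)=\#\{j\in[d]: v\in K_j\}$, the bound $w(v)\ll \log d/r$ is equivalent to
\[
  r \ll \log d\cdot\#\{j\in[d]: v\in K_j\}.
\]
So it suffices to show that, after removing the exceptional sets, each $j$ with $v\in K_j$ absorbs only $O(\log d)$ of the bad edges at $v$. The bound $w(v)\le 1$ is trivial.

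\textbf{Step 1 (what a bad edge looks like).} Take $v\in K_n$ and $u\in K_m$, and put $g=\gcd(m(u),m(v))$ and $h=\gcd(m,n)$. We have $h\mid g$, and $g\le d$ by the definition of $d$. If the edge is bad, then $g\ne h$, so $g$ is a proper multiple of $h$.

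We then claim $g\nmid n$ or $g\nmid m$. Suppose $g\mid n$. If in addition $\operatorname{lcm}(m,g)\le d$, then $\operatorname{lcm}(m,g)$ divides $m(u)$ and is a multiple of $m$. Since $u\in K_m$, this multiple cannot be proper, so $g\mid m$, and hence $g\mid h$, which is impossible. So $g\nmid m$ forces $\operatorname{lcm}(m,g)>d$, and symmetrically for $n$. This is the only place where the defining property of the sets $K_j$ enters.

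Next, $g\mid m(v)$ and $g\le d$. Let $j$ be a divisor of $m(v)$ with $j\le d$ that is maximal under divisibility among such divisors and satisfies $g\mid j$. Then $v\in K_j$. This gives a map $u\mapsto j(u)$ from bad neighbours of $v$ into $\{j: v\in K_j\}$.

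\textbf{Step 2 (fibres of the map).} Fix $j$ with $v\in K_j$, and consider all bad neighbours $u\in K_m$ with $g_u\mid j$ and $h\mid g_u$. The $u$'s are pairwise disjoint, and each is disjoint from $v$, so $a_u\not\equiv a_v\pmod{g_u}$. All $u\in K_m$ share the modulus factor $m$, and these classes are pairwise disjoint. I would group the $u$'s by the residue $a_u \bmod h$ and by which prime $p\mid m$ (or $p\mid n$) witnesses $g_u\ne h$, i.e.\ has $v_p(g_u)>v_p(h)$.

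The exceptional set $S_m$ is chosen to consist of at most one vertex for each prime $p\mid n$, plus one extra vertex. This gives $|S_m|\le \omega(n)+1$, and $S_n$ is built symmetrically from the primes of $m$. The chosen vertices are the ones whose removal kills the ``degenerate'' configurations, e.g.\ those $u$ for which $g_u$ gains a prime power that $n$ alone would already force. After this removal, the $g_u$ in a given fibre must form a chain under divisibility inside the divisor lattice of $j$ above $h$. The reason is that two incomparable $g_u,g_{u'}$ would, via the generalized Chinese remainder theorem together with disjointness from $v$, produce a common refinement contradicting the maximality of $j$ or the bound $d$. A chain of divisors of $j$ has length at most $\Omega(j)\le\log_2 d$. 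Moreover each $g_u$ in the chain is attained by $O(1)$ of the $u$'s, again by disjointness modulo $g_u$ relative to $a_v$. This yields the $O(\log d)$ bound per fibre.

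\textbf{Main obstacle.} The hard part is Step 2, namely proving that the fibre over a fixed $j$ has size $O(\log d)$. In particular one must pin down precisely which vertices go into $S_m$ and $S_n$ so that the chain/incomparability argument goes through with only $\omega(\cdot)+1$ removals. I would first try the $p$-adic version: for each prime $p$, look at the exponents $v_p(g_u)$ and remove the single extremal vertex per prime. Then I would check whether the remaining bad neighbours inside one fibre are forced to have distinct values of $\Omega(g_u)$; if so, the $\log d$ bound follows immediately.
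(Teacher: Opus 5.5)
Your charging framework is sound: sending each bad neighbour $u$ of $v$ to a divisibility-maximal $j\le d$ with $g_u\mid j\mid m(v)$ does put $v$ in $K_j$, and an $O(\log d)$ bound on every fibre would finish. But Step~2 is the heart of the lemma, and it is neither proved nor correctly predicted.

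First, the exceptional sets are never defined. The version you sketch (an extremal vertex per prime according to $v_p(g_u)$) depends on $v$, whereas $S_m$ must be fixed before quantifying over $v\in K_n\setminus S_n$.

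More seriously, the structure you aim for is false. After the removals, the bad neighbours in one fibre form an antichain, not a chain, and their values $\Omega(g_u)$ need not be distinct. For instance, let $d=m$ and $n\in(d/2,d)$ be primes. Choose three disjoint sets of primes $\{p^{(\ell)}_1,\dots,p^{(\ell)}_t\}$ with $j_\ell=\prod_i p^{(\ell)}_i\le d$. Give $v_\ell$ modulus $nj_\ell$ ($\ell=1,2,3$) and $u_i$ modulus $m\prod_\ell p^{(\ell)}_i$ ($i\le t$). With suitable residues this is a disjoint family with maximal gcd $d$, $K_n=\{v_1,v_2,v_3\}$ and $K_m=\{u_1,\dots,u_t\}$. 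Every edge between them is bad, the edge $v_\ell u_i$ has colour $p^{(\ell)}_i$, and your map is forced to send all bad neighbours of $v_\ell$ to the single index $j_\ell$. For $t\ge4$, no choice of $S_m,S_n$ of the allowed sizes leaves a chain.

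Relatedly, the appeals to the generalized Chinese remainder theorem and to disjointness from $a_v$ cannot be what drives the bound. The condition $a_u\not\equiv a_v\pmod{g_u}$ places no restriction on how many such $u$ there are. The lemma is really a statement about the moduli alone, using only the definition of the sets $K_j$ and the maximality of $d$.

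The missing idea is the paper's pairwise-coprimality statement. When $m=n$, every edge inside $K_n$ has colour $n$, so $r=0$. For $m\ne n$, take $S_n=(K_n\cap K_m)\cup\{v\in K_n:\ v_p(m(v))>v_p(n)\text{ for some prime }p\text{ with }v_p(m)>v_p(n)\}$, and define $S_m$ symmetrically. The size bounds hold for two reasons: $|K_n\cap K_m|\le1$, and two vertices of $K_n$ with the same witness prime $p\mid m$ would have $pn$ dividing the gcd of their moduli, contradicting either the definition of $K_n$ or the maximality of $d$.

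Now fix $v\in K_n\setminus S_n$ and bad neighbours $u\in K_m\setminus S_m$, and write $g_u=he_u$ with $h=\gcd(m,n)$. One checks that $e_u$ is coprime to $m/h$ and to $n/h$. Hence $e_u$ and $e_{u'}$ are coprime for $u\ne u'$: a common prime $p$ would satisfy $v_p(m)=v_p(h)$, so $pm\mid\gcd(m(u),m(u'))\le d$, contradicting $u\in K_m$. This, not disjointness, is also why each value $g_u$ occurs only once.

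Given this, your own charging closes at once. The $e_u$ in the fibre over $j$ are pairwise coprime divisors larger than $1$ of $j/h\le d$, so the fibre has at most $\log_2 d$ elements, and $r\le \log_2 d\cdot\#\{j: v\in K_j\}=\log_2 d/w(v)$. That endgame is shorter than the paper's, which orders the $e_u$, groups them greedily into blocks $E_1,\dots,E_u$ with $E_jh\le d$, and proves that the indices $b_jE_jh$ are distinct. As written, though, your proposal lacks the coprimality step, so it does not yet prove the lemma.
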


\begin{proof}
  If $m=n$, set $S_m=S_n=\emptyset$. For two distinct vertices $v,w\in K_n$, the number $\gcd(m(v),m(w))$ is divisible by $n$. If it were a proper multiple of $n$, then either this proper multiple is at most $d$, contradicting the definition of $K_n$, or it is larger than $d$, contradicting \eqref{eq:max-gcd-d}. Hence every such edge has color $n=\gcd(m,n)$, so $r=0$ for every $v\in K_n$ and the claim follows. We may therefore assume that $m\neq n$.

  Let us start by constructing the sets $S_m$ and $S_n$. The definitions for these sets are as follows:
  \begin{equation}
    S_n = (K_n \cap K_m) \cup \{ v \in K_n : \exists_{\substack{p \in \PP \\ v_p(m) > v_p(n)}} v_p(m(v)) > v_p(n) \}
  \end{equation}
  and analogously:
  \begin{equation}
    S_m = (K_n \cap K_m) \cup \{ v \in K_m : \exists_{\substack{p \in \PP \\ v_p(n) > v_p(m)}} v_p(m(v)) > v_p(m) \}
  \end{equation}
  We first check that $K_n\cap K_m$ has at most one element. Indeed, if $v\in K_n\cap K_m$, then $\operatorname{lcm}(n,m)\mid m(v)$. Since $m\neq n$, if $\operatorname{lcm}(n,m)\le d$, then this least common multiple is a proper multiple of at least one of $n$ or $m$, contradicting the corresponding membership in $K_n$ or $K_m$. Hence $\operatorname{lcm}(n,m)>d$. Two distinct vertices in $K_n\cap K_m$ would then have moduli with gcd divisible by $\operatorname{lcm}(n,m)>d$, contradicting \eqref{eq:max-gcd-d}.

  Now take $v\in S_n\setminus (K_n\cap K_m)$ and choose a prime $p=p(v)$ witnessing its membership in $S_n$, so $v_p(m(v))>v_p(n)$. By the restriction on this witness in the definition of $S_n$, we have $p\mid m$. We claim that the map $v\mapsto p(v)$ can be chosen injectively on $S_n\setminus (K_n\cap K_m)$. Indeed, if two distinct vertices $v_1, v_2 \in S_n\setminus (K_n\cap K_m)$ had the same chosen prime $p=p(v_1)=p(v_2)$, then $\min( v_p(m(v_1)), v_p(m(v_2)) > v_p(n)$, and therefore
  \begin{equation}
    pn | \gcd(m(v_1), m(v_2)) 
  \end{equation}
  If $pn\le d$, then $v_1,v_2\in L_{pn}$, contradicting the definition of $K_n$ because $pn$ is a proper multiple of $n$. If $pn>d$, then \eqref{eq:max-gcd-d} is contradicted. Thus the chosen primes are distinct and all divide $m$. This gives the desired bound on the size of $S_n$: $|S_n| \leq \omega(m) + 1$. By symmetry, we have also that $|S_m| \leq \omega(n) + 1$.

  Now fix $v\in K_n\setminus S_n$. Let $w_1,w_2\in K_m\setminus S_m$ be two distinct vertices, and suppose that the edge between $v$ and $w_1$ is colored with $e_1\gcd(n,m)$ and the edge between $v$ and $w_2$ is colored with $e_2\gcd(n,m)$. Then we want to check that:
  \begin{enumerate}
    \item $(e_i, \frac{m}{\gcd(n,m)}) = (e_i, \frac{n}{\gcd(n,m)}) = 1$ for each $i \in \{1, 2\}$.
    \item $(e_1, e_2) = 1$.
  \end{enumerate}
  To prove the first fact, fix $i\in\{1,2\}$. We first show that $(e_i,\frac{n}{\gcd(n,m)})=1$. Suppose, for contradiction, that a prime $p$ divides both $e_i$ and $\frac{n}{\gcd(n,m)}$. Since the edge between $v$ and $w_i$ has color $e_i\gcd(n,m)$, we have $e_i\gcd(n,m)=\gcd(m(v),m(w_i))$, and hence $e_i\gcd(n,m)\mid m(w_i)$. Thus
  \[
    v_p(m(w_i))\geq v_p(e_i\gcd(n,m))>v_p(\gcd(n,m)).
  \]
  On the other hand, $p\mid \frac{n}{\gcd(n,m)}$ implies $v_p(n)>v_p(m)$, so $v_p(\gcd(n,m))=v_p(m)$. Therefore $v_p(m(w_i))>v_p(m)$ and $v_p(n)>v_p(m)$, so $w_i\in S_m$, which gives a contradiction. This proves $(e_i,\frac{n}{\gcd(n,m)})=1$. Similarly, one can show $(e_i,\frac{m}{\gcd(n,m)})=1$.

  Now we prove the second statement. If there is a prime $p$ that divides $e_1$ and $e_2$, then: 
  \begin{equation} \label{eq:e_1-e_2-not-coprime}
    p \gcd(m,n) | \gcd(m(w_1), m(w_2)).
  \end{equation}
  Since we have already proved the first statement, and since $p\mid e_1$, the prime $p$ is coprime to $\frac{m}{\gcd(m, n)}$ and $\frac{n}{\gcd(n, m)}$. Hence $v_p(m) = v_p(n) = v_p(\gcd(n, m))$. Combining \eqref{eq:e_1-e_2-not-coprime} together with $m | \gcd(m(w_1), m(w_2))$, one concludes that:
  \begin{equation}
    pm | \gcd(m(w_1), m(w_2))
  \end{equation}
  Since $pm \leq d$ (by \eqref{eq:max-gcd-d}), one concludes that $w_1 \in L_{pm}$ which is disjoint from $K_m$, so we get a contradiction. Finally, we conclude that $(e_1, e_2) = 1$.

  Let $w_1, w_2, \ldots, w_r$ be all vertices in $K_m \backslash S_m$ so that the edge between $v$ and $w_i$ has color different from $\gcd(n, m)$. Without loss of generality, let $\gcd(m(v), m(w_i)) = e_i\gcd(n,m)$ where $(e_i)_{i = 1}^r$ is an increasing sequence of pairwise coprime numbers. Define recursively a sequence $0=s_0<s_1<\cdots$ as follows. Having chosen $s_j<r$, choose $s_{j+1}$ to be the largest integer with $s_j<s_{j+1}\le r$ such that:
  \begin{equation}
    \prod_{i = 1}^{s_{j+1} - s_j} e_{s_j + i} \leq \frac{d}{\gcd(n, m)}
  \end{equation}
  If $s_{j+1}<r$, repeat the same rule starting from $s_{j+1}$. Since the sequence is strictly increasing and bounded by $r$, the procedure stops after finitely many steps; write the final sequence as
  \[
    0=s_0<s_1<\cdots<s_u=r.
  \]
  For each $j \in [u]$, let:
  \begin{equation}
    E_j = \prod_{i = 1}^{s_j - s_{j-1}} e_{s_{j-1} + i}
  \end{equation}
  Then we get that:
  \begin{equation}
    v \in L_{E_j \gcd(n,m)}
  \end{equation}
  for each $j \in [u]$. For each such $j$, choose $b_j$ maximal such that $b_jE_j \gcd(n,m)\le d$ and $v\in L_{b_jE_j\gcd(n, m)}$. Then $v\in K_{b_jE_j\gcd(n,m)}$. We claim that the $u$ indices $b_jE_j\gcd(n,m)$ are all distinct. Indeed, suppose for some $j<k$ that
  \begin{equation}
    b_j E_j \gcd(n, m) = b_k E_k \gcd(n, m)
  \end{equation}
  with $b_j\le \frac{d}{E_j\gcd(n,m)}$ and $b_k\le \frac{d}{E_k\gcd(n,m)}$. After cancelling $n$, the common value $b_jE_j=b_kE_k$ is a common multiple of $E_j$ and $E_k$. Since the $e_i$ are pairwise coprime, so are $E_j$ and $E_k$, and hence $E_jE_k\mid b_jE_j$. But the maximality in the construction of $s_j$ gives $E_j e_{s_j+1}>\frac{d}{\gcd(n,m)}$, while $E_k\ge e_{s_j+1}$ because $k>j$ and the $e_i$ are increasing. Thus $E_jE_k>\frac{d}{\gcd(n,m)}$, contradicting $b_jE_j\le \frac{d}{\gcd(n,m)}$. Therefore these $u$ indices are distinct, and $v$ belongs to at least $u$ different sets $K_\ell$. It follows that $w(v)\le \frac{1}{u}$. On the other hand, every $e_i$ is larger than $1$, because the corresponding edge color is different from $\gcd(n,m)$. Hence, for each $j \in [u]$,
  \[
    2^{s_j-s_{j-1}}\le E_j\le \frac{d}{\gcd(n,m)}\le d,
  \]
  and so $s_j-s_{j-1}\ll \log d$. Therefore
  \begin{equation}
    r = \sum_{j = 1}^u (s_j - s_{j-1}) \ll u \log d 
  \end{equation}
  giving us the desired $w(v) \ll \frac{\log d}{r}$. Since $w(v) \leq 1$ follows from the definition of the weight, we conclude that the statement of the lemma is true.
\end{proof}

\section{Fourier positivity and the weighted estimate}
\label{sec:weighted-estimate}
We now prove Proposition~\ref{prop:weighted-partition-bound} by estimating a
weighted gcd sum in two ways. M\"obius inversion and Fourier orthogonality give
the lower bound, while pairwise disjointness and
Lemma~\ref{lem:relative-structural-decomposition} give the upper bound. We
begin with the positivity lemma underlying the lower bound.
\begin{lem} \label{lem:mobius-version-of-am-qm}
  For each integer $s \geq 1$ and each sequence of real numbers $(x_i)_{i = 0}^{s-1}$, define, for every divisor $e$ of $s$ and $r = s/e$,
  \[
    a_{r,j}=\sum_{\substack{0\le k<s\\ k\equiv j \pmod r}}x_k\qquad (0\le j<r).
  \]
  Then:
  \begin{equation} \label{eq:mobius-version-of-am-qm}
    \sum_{e\mid s}\mu(e)r\sum_{j=0}^{r-1} a_{r,j}^2\ge 0.
  \end{equation}
\end{lem}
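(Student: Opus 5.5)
The plan is to diagonalize every term of \eqref{eq:mobius-version-of-am-qm} at once with the discrete Fourier transform on $\mathbb Z/s\mathbb Z$. The Möbius sum should then collapse into a sum of squared absolute values over the reduced residues, which is visibly nonnegative.

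\emph{Step 1: Fourier transform.} Put $\zeta=e^{2\pi i/s}$ and define
\[
  \hat x(t)=\sum_{k=0}^{s-1}x_k\zeta^{-kt}\qquad (t\in\mathbb Z/s\mathbb Z).
\]

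\emph{Step 2: express each inner sum through $\hat x$.} Fix $e\mid s$ and $r=s/e$. The vector $(a_{r,j})_{0\le j<r}$ is the pushforward of $x$ under the reduction map $\mathbb Z/s\to\mathbb Z/r$. Its Fourier coefficient at $u\in\mathbb Z/r$ is
\[
  \sum_j a_{r,j}e^{-2\pi i ju/r}=\sum_k x_k e^{-2\pi i ku/r}=\hat x(ue).
\]
This holds because $e^{-2\pi i ku/r}=\zeta^{-kue}$ depends only on $k \bmod r$. Parseval on $\mathbb Z/r$ then gives
\[
  r\sum_{j=0}^{r-1}a_{r,j}^2=\sum_{u\in\mathbb Z/r}|\hat x(ue)|^2=\sum_{\substack{t\in\mathbb Z/s\\ e\mid t}}|\hat x(t)|^2.
\]
The last equality uses that $u\mapsto ue$ is a bijection from $\mathbb Z/r$ onto the multiples of $e$ in $\mathbb Z/s$. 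The $x_k$ are real, so $a_{r,j}^2=|a_{r,j}|^2$.

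\emph{Step 3: interchange and apply Möbius.} Summing against $\mu(e)$ and swapping the order of summation,
\[
  \sum_{e\mid s}\mu(e)\,r\sum_j a_{r,j}^2=\sum_{t\in\mathbb Z/s}|\hat x(t)|^2\sum_{e\mid \gcd(t,s)}\mu(e)=\sum_{\substack{t\in\mathbb Z/s\\ \gcd(t,s)=1}}|\hat x(t)|^2\ \ge 0.
\]
Here $\gcd(t,s)$ is well defined on residues mod $s$. The class $t=0$ has $\gcd(0,s)=s$, so it survives only when $s=1$, and the formula remains correct in that case.

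\emph{Main obstacle.} The only delicate point is the bookkeeping in Step 2: checking that the Fourier coefficients of the folded sequence $a_{r,\cdot}$ are exactly the values of $\hat x$ at multiples of $e$. Once this is verified, including the normalization factor $r$ in Parseval, Steps 1 and 3 are formal.
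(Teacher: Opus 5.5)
Your proposal is correct and follows essentially the same route as the paper. The paper likewise identifies the Fourier coefficients of the folded sequence $(a_{r,j})$ with $\widehat x(\ell e)$ (via orthogonality of additive characters), applies Parseval on $\mathbb Z/r\mathbb Z$, and collapses the M\"obius sum to $\sum_{\gcd(t,s)=1}|\widehat x(t)|^2\ge 0$; your remarks on the bijection $u\mapsto ue$ and on the class $t=0$ only spell out bookkeeping the paper leaves implicit.
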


\begin{proof}
  Define the discrete Fourier transform by
  \[
    \widehat x(t)=\sum_{u=0}^{s-1}x_u e^{-\frac{2\pi i t u}{s}}\qquad (0\le t<s).
  \]
  Fix $e\mid s$ and let $r=s/e$. By the orthogonality of additive characters,
  \[
    \mathbf 1_{k\equiv j \pmod r}
    =\frac1r\sum_{\ell=0}^{r-1}e^{\frac{2\pi i\ell(j-k)}{r}}.
  \]
  Thus, for $0\le j<r$,
  \[
    a_{r,j}
    =\frac1r\sum_{\ell=0}^{r-1}e^{\frac{2\pi i\ell j}{r}}
    \sum_{k=0}^{s-1}x_k e^{-\frac{2\pi i\ell k}{r}}
    =\frac1r\sum_{\ell=0}^{r-1}\widehat x(\ell e)e^{\frac{2\pi i\ell j}{r}}.
  \]
  By Parseval's identity,
  \[
    r\sum_{j=0}^{r-1}a_{r,j}^2
    = \sum_{\ell=0}^{r-1}|\widehat x(\ell e)|^2.
  \]
  Hence the left-hand side of \eqref{eq:mobius-version-of-am-qm} is equal to
  \[
    \sum_{e\mid s}\mu(e)\sum_{\ell=0}^{s/e-1}|\widehat x(\ell e)|^2
    = \sum_{t=0}^{s-1}|\widehat x(t)|^2\sum_{e\mid \gcd(s,t)}\mu(e).
  \]
  The inner sum is $1$ if $\gcd(s,t)=1$ and $0$ otherwise. Therefore
  \[
    \sum_{e\mid s}\mu(e)r\sum_{j=0}^{r-1} a_{r,j}^2
    =\sum_{\substack{0\le t<s\\ \gcd(t,s)=1}}|\widehat x(t)|^2\ge 0.
  \]
\end{proof}

\begin{proof}[Proof of Proposition~\ref{prop:weighted-partition-bound}]
  Fix $i\in\mathcal I$.
  Let us consider the following expression:
  \begin{equation} \label{eq:1-epsilon-auxiliary-expression}
    \sum_{n, m \in C_i} \sum_{v_1, v_2 \in V} w(v_1, n) w(v_2, m) \gcd(n, m) \mathbf{1}_{\gcd(n,m) | a(v_1) - a(v_2)}
  \end{equation}
  Firstly, we will try to estimate this expression below, using M\"obius inversion, it is equal to:
  \begin{equation} 
    \sum_{e,r \in [d] : er \leq d} \mu(e) r \sum_{\substack{n, m \in C_i \\ er | n,m}} \sum_{v_1, v_2 \in V} w(v_1, n) w(v_2, m) \mathbf{1}_{r | a(v_1) - a(v_2)}
  \end{equation}
  This can also be expressed as:
  \begin{equation} \label{eq:1-epsilon-auxiliary-expression-2}
    \sum_{q\le d}\sum_{er=q}\mu(e)r\sum_{a=0}^{r-1}\Biggl( \sum_{\substack{n \in C_i \\ q | n}} \sum_{v_1 \in V} w(v_1, n) \mathbf{1}_{a(v_1) \equiv a \pmod r} \Biggr)^2
  \end{equation}
  For each $q\le d$, define
  \[
    c_{u,q}=\sum_{\substack{n \in C_i \\ q | n}} \sum_{v_1 \in V} w(v_1, n)\mathbf{1}_{a(v_1)\equiv u \pmod q}
    \qquad (0\le u<q).
  \]
  If $q=er$ and $0\le a<r$, then the residue classes modulo $q$ lying in the class $a \pmod r$ are exactly the arithmetic progression
  \[
    a,\ a+r,\ \ldots,\ a+(e-1)r.
  \]
  Hence
  \[
    \sum_{\substack{n \in C_i \\ er | n}} \sum_{v_1 \in V} w(v_1,n)\mathbf{1}_{a(v_1)\equiv a \pmod r}
    =\sum_{\substack{0\le u<q\\ u\equiv a \pmod r}} c_{u,q}.
  \]
  Thus the contribution of a fixed $q$ in \eqref{eq:1-epsilon-auxiliary-expression-2} is
  \[
    \sum_{er=q}\mu(e)r\sum_{a=0}^{r-1}\Biggl(\sum_{\substack{0\le u<q\\ u\equiv a \pmod r}} c_{u,q}\Biggr)^2.
  \]
  Applying Lemma \ref{lem:mobius-version-of-am-qm} with $s=q$ and $x_u=c_{u,q}$, this fixed-$q$ contribution is nonnegative for every $q\le d$. Thus each $q$-summand in \eqref{eq:1-epsilon-auxiliary-expression-2} is nonnegative. The contribution of $q=1$ is
  \[
    \Biggl( \sum_{n \in C_i} \sum_{v_1 \in V} w(v_1,n) \Biggr)^2=k_i^2.
  \]
  Therefore \eqref{eq:1-epsilon-auxiliary-expression-2} is bounded below by
  $k_i^2$.  Before using disjointness, separate the diagonal terms with
  $v_1=v_2$. Their contribution
  to \eqref{eq:1-epsilon-auxiliary-expression} is at most
  \[
    \sum_{n,m\in C_i}\sum_{v\in V}w(v,n)w(v,m)\gcd(n,m)
    \leq dT_i\sum_{n\in C_i}\sum_{v\in V}w(v,n)=dT_i k_i,
  \]
  where we used $w(v,m)\leq 1$ and the definition of $T_i$. This is acceptable for the final $O(k_idT_i\log d)$ bound. For the remaining off-diagonal terms, if $\gcd(n,m)=\gcd(m(v_1),m(v_2))$, then the indicator gives the Chinese-remainder compatibility condition, contradicting the pairwise disjointness of $(a_i \pmod{m_i})_{i=1}^k$. Dropping the condition $v_1\neq v_2$ in the resulting upper bound, the off-diagonal contribution is bounded by
  \begin{equation}
    \sum_{n, m \in C_i} \sum_{v_1, v_2 \in V} w(v_1, n) w(v_2, m) \gcd(n, m) \mathbf{1}_{\gcd(n,m) \neq \gcd(m(v_1), m(v_2))}
  \end{equation}
  Let us split the sum above according to the structural behaviour of $K_n$ and $K_m$. First, we estimate the contribution to the above sum from the terms with $v_1\in S_n$ and $v_2\in K_m$:
  \begin{equation} \label{eq:1-epsilon-auxiliary-expression-exceptional}
    \sum_{n, m \in C_i} \sum_{\substack{v_1 \in S_n \\ v_2 \in K_m}}w(v_1, n) w(v_2, m) \gcd(n, m) \mathbf{1}_{\gcd(n,m) \neq \gcd(m(v_1), m(v_2))}
  \end{equation}
  which is bounded by:
  \begin{equation}
    \log d \sum_{n, m \in C_i} \sum_{v_2 \in K_m} w(v_2, m) \gcd(n, m)
  \end{equation}
  This expression can be rearranged as:
  \begin{equation}
    \log d \sum_{m \in C_i} \Bigl( \sum_{v_2 \in K_m} w(v_2, m) \Bigr) \Bigl( \sum_{n \in C_i}  \gcd(n, m) \Bigr)
  \end{equation}
  By the definition of $T_i$, for every fixed $m\in C_i$ we have
  \begin{equation}
    \sum_{n \in C_i} \gcd(n, m) = \sum_{e | m} \sum_{\substack{n \in C_i \\ \gcd(n, m) = e}} e \leq dT_i
  \end{equation}
  Recalling the definition of $k_i$ in \eqref{eq:ki-definition},
  \[
    \sum_{m \in C_i}\sum_{v_2\in K_m}w(v_2,m)\leq \sum_{m \in C_i}\sum_{v_2\in V}w(v_2,m)=k_i,
  \]
  we get \eqref{eq:1-epsilon-auxiliary-expression-exceptional} $\ll k_idT_i\log d$. By symmetry, the same bound holds for the terms with $v_2\in S_m$, so we can focus now on estimating:
  \begin{equation}
    \sum_{n, m \in C_i} \sum_{\substack{v_1 \in K_n \backslash S_n \\ v_2 \in K_m \backslash S_m}}w(v_1, n) w(v_2, m) \gcd(n, m) \mathbf{1}_{\gcd(n,m) \neq \gcd(m(v_1), m(v_2))}
  \end{equation}
  By the arithmetic-geometric mean inequality, we have:
  \begin{equation}
    w(v_1, n) w(v_2, m) \leq \frac{w(v_1, n)^2 + w(v_2, m)^2}{2}
  \end{equation}
  therefore, it is enough to deal with:
  \begin{equation} \label{eq:1-epsilon-auxiliary-expression-standard-2} 
    \sum_{n, m \in C_i} \sum_{\substack{v_1 \in K_n \backslash S_n \\ v_2 \in K_m \backslash S_m}} w(v_1, n)^2 \gcd(n, m) \mathbf{1}_{\gcd(n,m) \neq \gcd(m(v_1), m(v_2))}
  \end{equation}
  Fix $n \in C_i$ and perform the following division of $C_i$:
  \begin{equation}
    C_i = \bigsqcup_{e | n} U(e)
  \end{equation}
  where $U(e) := \{ m \in C_i : \gcd(m, n) = e \}$. For every divisor $e$ of $n$, consider firstly the simpler sum:
  \begin{equation}
    e \sum_{m \in U(e)} \sum_{\substack{v_1 \in K_n \backslash S_n \\ v_2 \in K_m \backslash S_m}} w(v_1, n)^2 \mathbf{1}_{e \neq \gcd(m(v_1), m(v_2))}
  \end{equation}
  By the second claim of Lemma \ref{lem:relative-structural-decomposition}, the number of vertices $v_2$ for which the summand is nonzero is $\ll \frac{\log d}{w(v_1, n)}$, since the condition in the indicator says exactly that the edge color is different from $\gcd(n,m)=e$. Therefore, the above sum is
  \begin{equation}
    \ll e \log d \sum_{m \in U(e)} \sum_{v_1 \in K_n \backslash S_n} w(v_1, n).
  \end{equation}
  Summing this over all divisors $e$ of $n$ and then using the definition of $T_i$, for this fixed $n$ we obtain
  \begin{align}
    \notag
    &\ll \log d \sum_{e|n}e|U(e)|\sum_{v_1 \in K_n \backslash S_n}w(v_1,n)\\
    \notag
    &= \log d \Bigl(\sum_{m\in C_i}\gcd(n,m)\Bigr)\sum_{v_1 \in K_n \backslash S_n}w(v_1,n)\\
    &\leq dT_i\log d \sum_{v_1 \in K_n \backslash S_n}w(v_1,n).
  \end{align}
  Thus the upper bound for \eqref{eq:1-epsilon-auxiliary-expression-standard-2} is:
  \begin{equation}
     \ll T_id \log d  \sum_{n \in C_i} \sum_{v_1 \in K_n \backslash S_n} w(v_1, n) \leq k_i T_i d \log d
  \end{equation}
  Taking together all these observations, we receive the following:
  \begin{equation}
    k_i^2 \ll k_idT_i \log d
  \end{equation}
  Hence, for every $i\in\mathcal I$,
  \[
    k_i\ll dT_i\log d.
  \]
\end{proof}

\end{document}